\documentclass[12pt,a4paper]{amsart}
\usepackage{graphicx}
\usepackage{float,epsfig}
\usepackage[english]{babel}
\usepackage[psamsfonts]{amssymb}
\usepackage{amsfonts}
\usepackage{amsmath}
\usepackage[utf8]{inputenc}
\newtheorem{theorem}{Theorem}

\newtheorem{corollary}{Corollary}

\def\Re{\mathop{\rm Re}\nolimits}
\def\Im{\mathop{\rm Im}\nolimits}
\def\th{\mathop{\rm th}\nolimits}

\begin{document}

\title{DISTORTION OF THE TRIANGULAR RATIO METRIC UNDER MOEBIUS TRANSFORMATIONS}

\author[S.~Nasyrov]{S.~Nasyrov}
\address{Kazan Federal University,
         Kazan, Russia}
 \email[]{semen.nasyrov@yandex.ru}

\maketitle

\begin{abstract}
		Let $\mathbb{U}$ be the unit disk in the complex plane. Denote by $s_\mathbb{U}(x,y)$ the triangular ratio metric in $\mathbb{U}$; for $x$, $y\in \mathbb{U}$ the value of $s_\mathbb{U}(x,y)$ equals the ratio of the Euclidean distance $|x-y|$ to the value $\inf_{z\in \partial \mathbb{U}}(|x-z|+|z-y|)$.   In the monograph by P.~Hariri, R.~Kl\'en,  and M.~Vuorinen "Conformally invariant metrics and quasiconformal mappings" (2020) the following problem was stated: for every Moebius automorphism of the unit disk, $w=f(z)=\frac{z+a}{1+za}$, $0\le a<1$,  and every points $z_1$, $z_2\in \mathbb{U}$ the sharp inequality $s_\mathbb{U}(f(z_1),f(z_2))\le (1+a)s_\mathbb{U}(z_1,z_2)$ holds. We prove that the conjecture is valid.
\end{abstract}

\noindent {Keywords:} {hyperbolic metric, triangular ratio metric, Moebius transformation}.

\noindent {Mathematics Subject Classification:} 51M09; 51M16; 30C20.

 %51M09 Elementary problems in hyperbolic and elliptic geometries
 %51M16 Inequalities and extremum problems
 %30C20 Conformal mappings of special domains
 %30C75 Extremal problems for conformal and quasiconformal mappings, other methods
 %30A10 Inequalities in the complex domain

\maketitle

\section*{Introduction}

In the theory of planar conformal and quasiconformal mappings the hyperbolic metric plays an important role (see, e.g. \cite{goluzin,avv,HKV}).   It is invariant under conformal mappings and decreases  as  the domain expands. Disadvantage  of the hyperbolic metric is the fact that it is difficult to calculate in domains with complicated geometry. In Euclidean spaces the class of conformal mappings is narrow and the study of quasiconformal mappings needs the use of other metrics whose properties are close to those of the hyperbolic metric in the plane. Such metrics depends not only on the Euclidean distance between points but also on their location with respect to the boundary of the domain under consideration; they are called intrinsic metrics.

One of such metrics is the triangular ratio metric. For a given domain $D\subset \mathbb{R}^n$ with a non-degenerate boundary $\partial D$ the triangular ratio metric $s_D$ can be defined as follows.
If $u$, $v\in D$, then $$s_D(u,v)=\frac{|u-v|}{\inf_{w\in \partial D}(|u-w|+|w-v|)}.$$ Here $|u-v|$ denotes the Euclidean distance between $u$ and $v$.

The properties of the triangular ratio metric for some classes of domains and its applications in the theory of conformal and quasiconformal mappings were studied in works of J.~Chen, P.~Hariri,  R.~Kl\'en, P.~H\"ast\"o, O.~Rainio, M.~Vuorinen and others (see, e.g., \cite{dknv,ranio,ranio2,ranio_vuor2,ranio_vuor}).

In the monograph \cite[p.455, Problem~(18)]{HKV} (see also \cite[thrm.~1.5]{chkv}) the problem on distortion of the triangular ratio metric in the unit disk  $\mathbb{U}=\{|z|<1\}$ under Moebius transformation
\begin{equation}\label{moeb}
w=f(z)=\frac{z+a}{1+za}, \quad 0\le a<1,
\end{equation}
was stated.
It was shown  that
\begin{equation}\label{1}
s_\mathbb{U}(w_1,w_2)\le L(a)s_\mathbb{U}(z_1,z_2), \quad w_1=f(z_1),\ w_2=f(z_2),
\end{equation}
for all points $z_1$, $z_2\in \mathbb{U}$ with a constant $L(a)\ge 1+a$ and was conjectured that the best value of $L(a)$ is equal to $1+a$.
We will prove that this conjecture holds.

\begin{theorem}\label{th1}
Let $w=f(z)$ be a Moebius automorphism of the unit disk $\mathbb{U}$  of the form \eqref{moeb}. Then for the triangular ratio metric  $s_\mathbb{U}$ in $\mathbb{U}$ and every points $z_1$, $z_2\in \mathbb{U}$ we have
\begin{equation}\label{main}
s_\mathbb{U}(w_1,w_2)\le (1+a)s_\mathbb{U}(z_1,z_2), \quad w_1=f(z_1), w_2=f(z_2),
\end{equation}
and the constant $1+a$ is the best possible.
\end{theorem}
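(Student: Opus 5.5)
Write $s(z_1,z_2)=|z_1-z_2|/d(z_1,z_2)$, where $d(z_1,z_2)=\min_{|\zeta|=1}(|z_1-\zeta|+|\zeta-z_2|)$. We have $|w_1-w_2|=\frac{(1-a^2)|z_1-z_2|}{|1+az_1||1+az_2|}$, and for $\zeta=f(\eta)$ with $|\eta|=1$,
\[
|w_j-\zeta|=\frac{(1-a^2)|z_j-\eta|}{|1+az_j||1+a\eta|}.
\]
Substituting, inequality \eqref{main} becomes
\[
\min_{|\eta|=1}\frac{1}{|1+a\eta|}\left(\frac{|z_1-\eta|}{|1+az_1|}\cdot|1+az_2|+\frac{|z_2-\eta|}{|1+az_2|}\cdot|1+az_1|\right)\ \ge\ \frac{d(z_1,z_2)}{1+a}.
\]
Equivalently, for every $\eta\in\partial\mathbb{U}$,
\[
(1+a)\left(\lambda|z_1-\eta|+\lambda^{-1}|z_2-\eta|\right)\ge |1+a\eta|\,d(z_1,z_2),\qquad \lambda=\frac{|1+az_2|}{|1+az_1|}.
\]
So the plan is to show that for each boundary point $\eta$ the weighted sum dominates $\frac{|1+a\eta|}{1+a}\,d(z_1,z_2)$. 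Since $|1+a\eta|\le 1+a$, it suffices to find some $\zeta\in\partial\mathbb{U}$, possibly depending on $\eta$, with
\[
|z_1-\zeta|+|z_2-\zeta|\le \frac{1+a}{|1+a\eta|}\left(\lambda|z_1-\eta|+\lambda^{-1}|z_2-\eta|\right).
\]

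The naive choice $\zeta=\eta$ fails when $\lambda$ is far from $1$, because the weights are unbalanced. The factor $(1+a)/|1+a\eta|\ge 1$ must compensate, and the ratio $\lambda$ is controlled by $a$. Indeed $|1+az|\in(1-a,1+a)$, but more precisely
\[
|1+az_2|\le |1+az_1|+a|z_1-z_2|.
\]
Suppose $\lambda\ge1$. Then
\[
\lambda|z_1-\eta|+\lambda^{-1}|z_2-\eta|\ge |z_1-\eta|+|z_2-\eta|-(1-\lambda^{-1})|z_2-\eta|,
\]
and the deficit must be absorbed. I would try to prove the stronger pointwise inequality
\[
\lambda|z_1-\eta|+\lambda^{-1}|z_2-\eta|\ge \frac{|1+a\eta|}{1+a}\,\big(|z_1-\eta|+|z_2-\eta|\big)
\]
whenever it holds, and otherwise use $\zeta$ equal to the boundary point nearest $z_1$ together with the triangle inequality $|z_1-\zeta|+|z_2-\zeta|\le 2(1-|z_1|)+|z_1-z_2|$. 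The key tool is to bound $\lambda$ and $|1+a\eta|$ in terms of $|z_j-\eta|$ via
\[
|1+az_j|\le |1+a\eta|+a|z_j-\eta|.
\]
This gives
\[
\lambda\le \frac{|1+a\eta|+a|z_2-\eta|}{|1+az_1|}\qquad\text{and}\qquad |1+az_1|\ge |1+a\eta|-a|z_1-\eta|.
\]
Setting $r_j=|z_j-\eta|$ and $c=|1+a\eta|\in[1-a,1+a]$, everything reduces to an elementary inequality in the variables $r_1,r_2,c$ and the moduli $|1+az_j|$, which one checks by minimizing over the free parameters. This reduction, and finding the right choice of $\zeta$ so that the resulting inequality is true, is where I expect the main obstacle. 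If the direct approach resists, the fallback is to use symmetry and monotonicity to reduce to $z_1,z_2$ on a diameter or to the case where the optimal $\eta$ is the point $-1$ or where the ellipse is tangent, and then compute explicitly.

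For sharpness, take $z_1=-t$ and $z_2=-t+\varepsilon$ on the real axis with $t\to 1$ suitably. Points near $-1$ are stretched by the factor $f'(-1)=\frac{1+a}{1-a}$. Compare the ratios of $s$ for $z_1=0$, $z_2=\varepsilon$ on the real line: then $s_\mathbb{U}(z_1,z_2)\approx\varepsilon/2$, and $w_1=a$, $w_2\approx a+(1-a^2)\varepsilon$, so $s_\mathbb{U}(w_1,w_2)\approx \frac{(1-a^2)\varepsilon}{2(1-a)}=(1+a)\varepsilon/2$. Letting $\varepsilon\to0$ shows the ratio tends to $1+a$, so the constant cannot be improved.
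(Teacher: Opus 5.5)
Your reduction contains an algebra error, and the inequality that would carry the proof is never established. From $|w_1-w_2|=(1-a^2)|z_1-z_2|/(|1+az_1||1+az_2|)$ and $\min_{\zeta}(|w_1-\zeta|+|w_2-\zeta|)=(1-a^2)\min_{\eta}|1+a\eta|^{-1}\bigl(|z_1-\eta|/|1+az_1|+|z_2-\eta|/|1+az_2|\bigr)$, inequality \eqref{main} is equivalent to
\[
\min_{|\eta|=1}\frac{|1+az_2|\,|z_1-\eta|+|1+az_1|\,|z_2-\eta|}{|1+a\eta|}\ \ge\ \frac{d(z_1,z_2)}{1+a}.
\]
So the weights are $|1+az_2|$ and $|1+az_1|$, not $\lambda$ and $\lambda^{-1}$; you multiplied the two terms of the sum by different factors. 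For each $\eta$, your expression exceeds the correct one by $|1+a\eta|^{-1}\bigl[|1+az_2||z_1-\eta|(|1+az_1|^{-1}-1)+|1+az_1||z_2-\eta|(|1+az_2|^{-1}-1)\bigr]$. This is positive when both $|1+az_j|<1$, e.g. both points near $-1$. So even a complete proof of your displayed inequality would not give \eqref{main} for such pairs.

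The more serious problem is that the core step is missing. After the reduction you list possible tools: the bounds $|1+az_j|\le|1+a\eta|+a|z_j-\eta|$, a case split on whether a pointwise inequality ``holds'', and the crude choice of $\zeta$ nearest $z_1$. You then assert an elementary inequality in $r_1,r_2,c,|1+az_j|$ that ``one checks by minimizing'', and you flag this yourself as the main obstacle. That unverified step is the entire theorem. The constant $1+a$ is approached (at $z_1=0$, $z_2\to0$), so any estimate must be asymptotically lossless there. Your triangle-inequality bounds do not control this, and the quantities involved are not independent parameters.

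What you are missing is the structural input the paper uses. By Theorem~\ref{supp}, $s_\mathbb{U}(w_1,w_2)=\th\frac{\rho_{\mathbb{H}_\phi}(w_1,w_2)}{2}$ for the supporting half-plane at the contact point $e^{i\phi}$ of the maximal ellipse. Moreover, $f^{-1}(\mathbb{H}_\phi)$ is bounded by a circle (or line) through $-1/a$ tangent to $\partial\mathbb{U}$ at $e^{i\theta}$. In the external case $\mathbb{H}_\theta\subset f^{-1}(\mathbb{H}_\phi)$, and monotonicity of the hyperbolic metric gives constant $1$. In the internal case $f^{-1}(\mathbb{H}_\phi)$ is a disk of radius $R\ge\frac12(1+1/a)$. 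There the reflection law at the contact point $e^{i\psi}$ for $z_1,z_2$ (equal arguments of $1-e^{-i\psi}z_1$ and $\overline{1-e^{-i\psi}z_2}$) yields the factor $2R/(2R-1)\le 1+a$.

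Your sharpness example $z_1=0$, $z_2=\varepsilon$ is correct and rigorous. Indeed $s_\mathbb{U}(z_1,z_2)=\varepsilon/(2-\varepsilon)$ exactly, and choosing $\zeta=1$ gives the needed lower bound for $s_\mathbb{U}(w_1,w_2)$; the paper takes this part from the earlier literature. Your opening idea with points near $-1$ does not help, since there the ratio tends to $1$.
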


\section{Hyperbolic metric and triangular ratio metric}

First we recall that in the unit disk $\mathbb{U}$ the hyperbolic metric $\rho_\mathbb{U}$ can be defined by the equality
$$
\th\frac{\rho_\mathbb{U}(u,v)}{2}=\frac{|u-v|}{|1-u\overline{v}|}, \quad u,v\in \mathbb{U}.
$$
If $D$ is a simply-connected domain in $\mathbb{C}$ with nondegenerate boundary and $f:D\to U$ is a conformal mapping of $D$ onto $\mathbb{U}$, then, by definition, the hyperbolic metric $\rho_D$ in $D$ is
$$
\rho_D(u,v)=\rho_\mathbb{U}(f(u),f(v)), \quad u,v\in D.
$$
This definition is correct, i.e. it does not depend on the choice of the conformal mapping $f$. As a corollary, we obtain that the hyperbolic metric is invariant under conformal mappings: if $g:D_1\to D_2$ is a conformal mapping of $D_1$ onto $D_2$, then
$$
\rho_{D_1}(u,v)=\rho_{D_2}(g(u),g(v)), \quad  u,v\in D_1.
$$

In particular, in the upper half-plane $\mathbb{H}=\{z\in \mathbb{C}\mid \Im z>0\}$ we have
$$
\th\frac{\rho_\mathbb{H}(u,v)}{2}=\frac{|u-v|}{|u-\overline{v}|}, \quad u,v\in \mathbb{H},
$$
and in the disk $K(z_0,R)=\{z\in \mathbb{C} : |z-z_0|<R\}$
\begin{equation}\label{K}
\th\frac{\rho_{K(z_0,R)}(u,v)}{2}=\frac{R|u-v|}{|R^2-(u-z_0)(\overline{v}-\overline{z}_0)|}, \quad u,v\in K(z_0,R).
\end{equation}
The hyperbolic metric strictly decreases as domain $D$ expands, i.e.  Hyperbolic Metric Principle holds   (see e.g. \cite[ch.~VIII, \S~2]{goluzin}).

\begin{theorem} \label{princ}
If $D_1$ and $D_2$ are two domains in $\mathbb{C}$ with nondegenerate boundaries and $D_1\subset D_2$, then $\rho_{D_1}(u,v)\ge \rho_{D_2}(u,v)$ for every pair of points $u$, $v\in D_1$. Moreover, if  $D_1\not=D_2$ and $u\neq v$, then $\rho_{D_1}(u,v)> \rho_{D_2}(u,v)$.
\end{theorem}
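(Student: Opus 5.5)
The plan is to reduce the statement to the Schwarz--Pick lemma, working with simply connected domains $D_1\subset D_2$ as in the definition of $\rho_D$ above. By the Riemann mapping theorem (the boundaries are nondegenerate, so neither domain is $\mathbb{C}$ or $\mathbb{C}$ minus a point) there are conformal maps $g_1:D_1\to\mathbb{U}$ and $g_2:D_2\to\mathbb{U}$. Set
$$
h=g_2\circ g_1^{-1}:\mathbb{U}\to\mathbb{U}.
$$
This map is holomorphic, since $g_1^{-1}(\mathbb{U})=D_1\subset D_2$, and it maps $\mathbb{U}$ into $\mathbb{U}$. For $u,v\in D_1$ put $x=g_1(u)$ and $y=g_1(v)$. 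By definition $\rho_{D_1}(u,v)=\rho_\mathbb{U}(x,y)$ and $\rho_{D_2}(u,v)=\rho_\mathbb{U}(h(x),h(y))$. So the claim becomes
$$
\rho_\mathbb{U}(h(x),h(y))\le\rho_\mathbb{U}(x,y),
$$
together with strict inequality when $D_1\neq D_2$ and $x\ne y$.

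For the inequality, recall that $\th(t/2)$ is increasing. Using the formula for $\rho_\mathbb{U}$, it therefore suffices to prove
$$
\frac{|h(x)-h(y)|}{|1-h(x)\overline{h(y)}|}\le\frac{|x-y|}{|1-x\overline{y}|}.
$$
This is the invariant form of the Schwarz lemma. To prove it, let $\varphi_y(z)=(z-y)/(1-\overline{y}z)$ and $\psi(w)=(w-h(y))/(1-\overline{h(y)}w)$; both are disk automorphisms. Then $F=\psi\circ h\circ\varphi_y^{-1}$ maps $\mathbb{U}$ into $\mathbb{U}$ and fixes $0$. The Schwarz lemma gives $|F(\zeta)|\le|\zeta|$, and substituting $\zeta=\varphi_y(x)$ yields exactly the displayed inequality.

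For strictness, the Schwarz lemma says equality $|F(\zeta)|=|\zeta|$ at some $\zeta\neq0$ forces $F(\zeta)=e^{i\theta}\zeta$. In that case $h$ is a Möbius automorphism of $\mathbb{U}$, so in particular $h$ is surjective onto $\mathbb{U}$. But $h(\mathbb{U})=g_2(D_1)$, and $g_2(D_1)$ is a proper subset of $g_2(D_2)=\mathbb{U}$ whenever $D_1\ne D_2$, because $g_2$ is injective. Hence for $u\ne v$ (so $x\ne y$, $\zeta\ne0$) and $D_1\neq D_2$ the inequality is strict, and so $\rho_{D_1}(u,v)>\rho_{D_2}(u,v)$.

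The only delicate point is the setting: $\rho_D$ has been defined here only for simply connected $D$. If general domains are intended, one replaces the Riemann maps by universal coverings $\pi_j:\mathbb{U}\to D_j$. The inclusion $D_1\hookrightarrow D_2$ lifts to a holomorphic $h:\mathbb{U}\to\mathbb{U}$ with $\pi_2\circ h=\pi_1$, and the same Schwarz--Pick argument works, with $\rho_{D_j}$ defined as the infimum over lifts. For strictness in that case one notes that if $h$ were an automorphism, then $D_1=\pi_1(\mathbb{U})=\pi_2(h(\mathbb{U}))=D_2$. I expect this bookkeeping, rather than any computation, to be the main point needing care.
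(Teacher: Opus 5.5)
Your proof is correct. The inequality follows from $|F(\zeta)|\le|\zeta|$, and strictness holds because equality would make $h=g_2\circ g_1^{-1}$ an automorphism, contradicting $h(\mathbb{U})=g_2(D_1)\neq\mathbb{U}$. The paper gives no proof of its own and only cites Goluzin for this classical principle, whose standard proof is this Schwarz--Pick argument. In your aside on multiply connected domains, strictness also needs the infimum over lifts to be attained, which holds because the deck group acts properly discontinuously on $\mathbb{U}$.
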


In convex domains the triangular ratio metric can be calculated via hyperbolic metrics of supporting half-planes.

\begin{theorem} \label{supp}
For every convex domain $D\subset \mathbb{C}$ with nonempty boundary
we have
\begin{equation}\label{convex}
s_D(u,v)=\sup_{H} \th\frac{\rho_{H}(u,v)}{2}
\end{equation}
where supremum is taken over all half-planes $H$ containing $D$ and bounded by supporting lines of~$D$.

Moreover, if $G$ is the maximal ellipse with foci $u$, $v$ contained in $D$ and $w\in \partial G\cap \partial D$ , then \\
(I) The triangular ratio distance between $u$ and $v$ is equal to
$$
s_D(u,v)=\th\frac{\rho_{H_w}(u,v)}{2}
$$
where $H_w$ is the  half-plane containing $D$ and bounded by the unique supporting line of~$D$ at the point~$w$,\\
(II) The normal to $G$ at the point $w$ is the bisector of the angle between the vectors $\overrightarrow{wu}$ and $\overrightarrow{wv}$.
\end{theorem}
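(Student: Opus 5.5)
We need to prove Theorem \ref{supp}: for convex $D$, $s_D(u,v)=\sup_H \th(\rho_H(u,v)/2)$, together with (I) and (II).

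The plan rests on an explicit formula for a single half-plane. Fix a supporting half-plane $H$ with boundary line $\ell$, and let $v^*$ be the reflection of $v$ in $\ell$. For $w\in\ell$ we have $|w-v|=|w-v^*|$. Since $u$ and $v^*$ lie on opposite sides of $\ell$, the triangle inequality gives
$$\inf_{w\in\ell}(|u-w|+|w-v|)=|u-v^*|,$$
and the infimum is attained where the segment $[u,v^*]$ crosses $\ell$. After a rigid motion taking $H$ to $\mathbb{H}$, reflection becomes conjugation, so $|u-v^*|=|u-\overline v|$. Hence
$$\frac{|u-v|}{\inf_{w\in \ell}(|u-w|+|w-v|)}=\frac{|u-v|}{|u-\overline v|}=\th\frac{\rho_H(u,v)}{2}.$$
This is the single-half-plane identity: $s_H(u,v)=\th(\rho_H(u,v)/2)$, with the left-hand side taken relative to the line $\ell=\partial H$.

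\emph{Inequality $s_D\ge\sup_H$.} Since $D\subset H$, every segment from $u$ to a point of $\ell$ meets $\partial D$. For $w'\in\ell$, the segment $[u,w']$ meets $\partial D$ at some $z$ with $|u-z|\le|u-w'|$, and $|z-v|\le|z-w'|+|w'-v|$. Therefore
$$|u-z|+|z-v|\le |u-w'|+|w'-v|,$$
so $\inf_{\partial D}\le\inf_\ell$. This gives $s_D(u,v)\ge s_H(u,v)$ for every supporting half-plane $H$.

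\emph{Inequality $s_D\le\sup_H$, and (I), (II).} Let $c=\inf_{z\in\partial D}(|u-z|+|z-v|)$. The ellipse $G=\{z:|z-u|+|z-v|<c\}$ is the largest ellipse with foci $u,v$ contained in $D$. Containment holds because $D$ is connected and $G\cap\partial D=\emptyset$. Take $w\in\partial G\cap\partial D$; it exists provided the infimum is attained, which holds because the sum tends to infinity at infinity and the boundary points near the minimising sequence stay bounded, with closedness of $\partial D$ giving a limit point.

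Any line $L$ supporting $D$ at $w$ must also support the convex set $G\subset D$ at $w$. Since $\partial G$ is smooth, $L$ is unique and equals the tangent to $G$ at $w$. Every point of $L$ lies outside $G$ or on $\partial G$, so
$$\inf_L(|u-z|+|z-v|)=c,$$
attained at $w$. By the single-half-plane identity, $\th(\rho_{H_w}/2)=|u-v|/c=s_D(u,v)$. This proves (I) and yields equality in the supremum, attained at $H_w$.

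Claim (II) is the classical reflection property of the ellipse: $w$ minimises $|u-z|+|z-v|$ on the tangent line, so the angles that $wu$ and $wv$ make with the tangent are equal. Equivalently, the normal bisects the angle between $\overrightarrow{wu}$ and $\overrightarrow{wv}$; this can also be checked via the gradient of $|z-u|+|z-v|$, which is the sum of the two unit vectors.

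The main delicate point is the attainment and uniqueness step: we need the infimum to be realised on $\partial D$, and the supporting line at $w$ to be unique even though $\partial D$ may have corners. Uniqueness follows from smoothness of $G$, as argued above. When $u=v$ both sides vanish. Unbounded $D$ is handled by the coercivity noted above, since nonempty boundary guarantees $c<\infty$.
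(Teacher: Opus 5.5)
Your proof is correct and is essentially the paper's argument: the maximal ellipse touches $\partial D$ at $w$, the supporting line there is tangent to $G$, and reflection in that line gives $|u-w|+|w-v|=|u-\overline v|$. The one difference is order: you get (I) directly from the fact that the tangent line misses $G$ and then deduce (II), whereas the paper cites the optical property of the ellipse first and derives (I) from it.

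Your segment argument for $s_D\ge s_H$ (monotonicity of $s$ under inclusion) is in fact the right justification of the lower bound; the paper appeals to the hyperbolic metric principle, which does not compare $s_D$ with $\rho_H$ directly. The only slip is that $G\subset D$ follows from the connectedness of $G$, using $u\in G\cap D$, not from the connectedness of $D$.
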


\begin{proof} First we note that (II) is a simple corollary of the optical property of ellipses or so-called reflection law which states that the focal radii of a point on an ellipse form equal angles with the tangent to the ellipse at that point (see, e.g. \cite{zw}).

Since for every  half-plane $H$ containing $D$ and bounded by a supporting line of~$D$ by Theorem~\ref{princ} we have  $s_D(u,v)\ge\th\frac{\rho_{H}(u,v)}{2}$, to prove  \eqref{convex} we only need to show that (I) is valid. The ellipse $G$ is contained in $H_w$, therefore, it touches $\partial H_w$ at the point $w$. Without loss of generality we can consider that $H_w$ is the upper half-plane $\mathbb{H}$. Then, by (II), $\arg(u-w)=\pi-\arg(v-w)$ and $|u-w|+|w-v|=|u-w|+|w-\overline{v}|=|u-\overline{v}|$. This implies
$$
s_D(u,v)=\frac{|u-v|}{|u-w|+|w-v|}=\frac{|u-v|}{|u-\overline{v}|}=\th\frac{\rho_{\mathbb{H}}(u,v)}{2}.
$$
\end{proof}

As a corollary of Theorem~\ref{supp} we obtain

\begin{theorem} \label{supp1}
In the unit disk $\mathbb{U}$ we have
$$
s_\mathbb{U}(z_1,z_2)=\sup_{0\le\vartheta\le 2\pi} \th\frac{\rho_{\mathbb{H}_\vartheta}(z_1,z_2)}{2},
$$
where $\rho_{\mathbb{H}_\vartheta}$ is the hyperbolic metric in the half-plane ${\mathbb{H}_\vartheta}=\{z\in \mathbb{C} \mid \Re(1-e^{-i\vartheta}z)>0\}$ containing $\mathbb{U}$ and bounded by the supporting line of $\mathbb{U}$ at  the point $e^{i\vartheta}$.
\end{theorem}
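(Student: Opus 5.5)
The statement is the special case $D=\mathbb{U}$ of Theorem~\ref{supp}, so my plan is to apply that theorem directly. The unit disk is convex with nonempty boundary. Hence \eqref{convex} gives $s_\mathbb{U}(z_1,z_2)=\sup_H \th\frac{\rho_H(z_1,z_2)}{2}$, where $H$ runs over all half-planes containing $\mathbb{U}$ and bounded by supporting lines of $\mathbb{U}$. The remaining work is to identify this family of half-planes with $\{\mathbb{H}_\vartheta\}_{0\le\vartheta\le 2\pi}$.

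First I note that $\partial\mathbb{U}$ is the circle $|z|=1$, which is smooth and strictly convex. Through each boundary point $e^{i\vartheta}$ there is therefore exactly one supporting line, namely the tangent line $\{z : \Re(e^{-i\vartheta}z)=1\}$. Conversely, every supporting line of $\mathbb{U}$ touches $\overline{\mathbb{U}}$ at some point $e^{i\vartheta}$ and so is of this form. The half-plane bounded by this line that contains $\mathbb{U}$ is $\{\Re(e^{-i\vartheta}z)<1\}$, because $0$ lies in it. Since $\Re(1-e^{-i\vartheta}z)=1-\Re(e^{-i\vartheta}z)$, this half-plane is exactly $\mathbb{H}_\vartheta$. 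As $\vartheta$ ranges over $[0,2\pi]$, every supporting half-plane is obtained (with $\vartheta=0$ and $\vartheta=2\pi$ giving the same one), so the supremum over $H$ in \eqref{convex} equals the supremum over $\vartheta$.

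Substituting this identification into \eqref{convex} yields the claimed formula. Part (I) of Theorem~\ref{supp} also shows that the supremum is attained, at the point $e^{i\vartheta}$ where the maximal ellipse with foci $z_1,z_2$ touches the circle.

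There is no real obstacle here; the only point that needs care is the bijection between supporting lines of the disk and points of the circle, which follows from the smoothness and strict convexity of the circle. For later use I would also record the explicit expression
$$\th\frac{\rho_{\mathbb{H}_\vartheta}(z_1,z_2)}{2}=\frac{|z_1-z_2|}{|2e^{i\vartheta}-z_1-e^{2i\vartheta}\overline{z}_2|}.$$
It comes from reflecting $z_2$ in the line $\partial\mathbb{H}_\vartheta$, which maps $z_2$ to $2e^{i\vartheta}-e^{2i\vartheta}\overline{z}_2$.
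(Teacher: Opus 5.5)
Your proposal is correct and follows the paper's route: the paper simply states this result as a direct corollary of Theorem~\ref{supp} applied to $D=\mathbb{U}$. Your identification of the supporting half-planes of $\mathbb{U}$ with the family $\mathbb{H}_\vartheta$ fills in the only detail the paper leaves implicit, and your explicit reflection formula is consistent with the expression the paper later uses for $\th\frac{\rho_{\mathbb{H}_\psi}(z_1,z_2)}{2}$.
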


\section{Proof of the main theorem}

Let $w_1$, $w_2\in \mathbb{U}$ and $G$ be the maximal ellipse with foci at the points $w_1$ and $w_2$, the interior  of which is in $\mathbb{U}$. Denote by $e^{i\phi}$  a point at which the ellipse touches the unit circle. Let $e^{i\theta}=f^{-1}(e^{i\phi})$.  Then, by Theorem~\ref{supp} and~\ref{supp1}
\begin{equation}\label{sw}
s_{\mathbb{U}}(w_1,w_2)=\th\frac{\rho_{\mathbb{H}_\phi}(w_1,w_2)}{2}
\end{equation}
and
\begin{equation}\label{sz}
s_\mathbb{U}(z_1,z_2)\ge \th\frac{\rho_{\mathbb{H}_\theta}(z_1,z_2)}{2}.
\end{equation}

Consider the preimage $D_\phi=f^{-1}(\mathbb{H}_\phi)$ of $\mathbb{H}_\phi$ under the mapping $f$. It is bounded by the curve $\Gamma_{\theta}$ which is the preimage of the supporting line of the unit disk at the point $e^{i\phi}$. We note that  $\Gamma_{\theta}$ is either a circle or a straight line. Moreover it passes through the point $f^{-1}(\infty)=-1/a$ and touches the unit circle at the point
$e^{i\theta}$. If  $\Gamma_{\theta}$ is a circle, the cases of internal and external tangency of $\Gamma_{\theta}$ and $\partial \mathbb{U}$ are possible (see Fig.~\ref{support}).
Consider two cases.

(a) If  $\Gamma_{\theta}$ is a straight line or a circle with external tangency of $\Gamma_{\theta}$ we have $\mathbb{H}_\theta\subset D_\phi$, therefore, because of conformal invariance of hyperbolic metric, by Theorem~\ref{princ}
we have
$$
\rho_{\mathbb{H}_\phi}(w_1,w_2)=\rho_{{D}_\phi}(z_1,z_2)\le \rho_{\mathbb{H}_\theta}(z_1,z_2),
$$
and this implies that
$$
\th\frac{\rho_{\mathbb{H}_\phi}(w_1,w_2)}{2}\le \th\frac{\rho_{\mathbb{H}_\theta}(z_1,z_2)}{2}.
$$
Taking into account \eqref{sw} and \eqref{sz}, we obtain
\begin{equation}\label{2a}
s_\mathbb{U}(w_1,w_2)\le s_\mathbb{U}(z_1,z_2),
\end{equation}
i.e. in this case we proved a stronger inequality than \eqref{main}  with the constant $1$ in the right-hand side,  instead of $1+a$.

\begin{figure}[ht] \centering
\includegraphics[width=3.5 in]{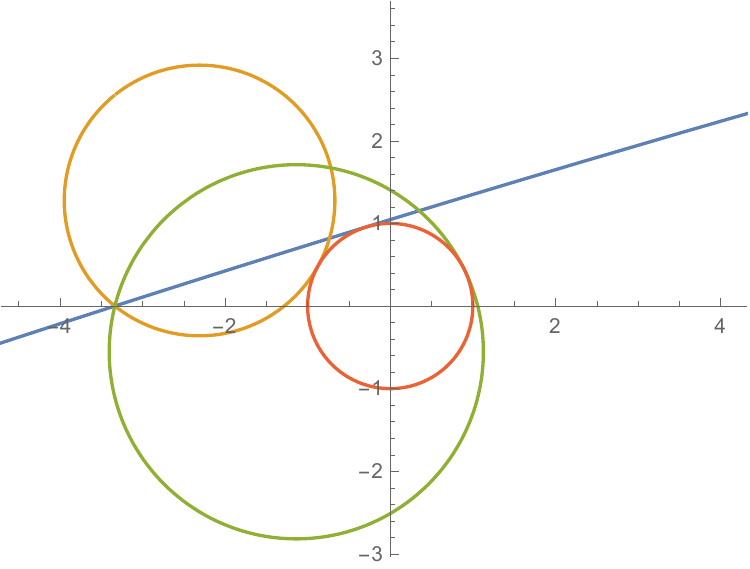}\ \caption{Straight line and circles which are preimages of some supporting lines of the unit disk under the Moebius mapping $w=\frac{z+a}{1+az}$,  $a=0.3$.}
\label{support}
\end{figure}

(b) It remains to consider the case of internal tangency. In this case,  $\Gamma_{\theta}$ is a circle passing through the points $-1/a$ and $e^{i\theta}$ and containing the unit circle in its interior.
Let $z_0$ be the center of $\Gamma_{\theta}$. Then the origin, as the center of the unit circle, is on the segment with endpoints $e^{i\theta}$ and $z_0$. Therefore, $z_0=-te^{i\theta}$ for some $t>0$ and the radius of $\Gamma_{\theta}$ equals $R=t+1$. Since  the point $-1/a\in \Gamma_{\theta}$, we have $|z_0+1/a|=R$ or
\begin{equation}\label{R1}
|(1-R)e^{i\theta}+1/a|=R.
\end{equation}
Therefore, by the triangle inequality, $R\ge1/a-(R-1)$, consequently,
\begin{equation}\label{R}
R\ge \frac{1}{2}\left(1+\frac{1}{a}\right).
\end{equation}

Taking into account Theorem~\ref{supp}, we will show that
\begin{equation}\label{a}
\th\frac{\rho_{{D}_\phi}(z_1,z_2)}{2}\le (1+a)s_{\mathbb{U}}(z_1,z_2)=(1+a)\th\frac{\rho_{\mathbb{H}_\psi}(z_1,z_2)}{2},
\end{equation}
where $e^{i\psi}$ is the point at which the maximal ellipse with foci at the points $z_1$ and $z_2$, contained in $\mathbb{U}$, touches the unit circle.

By \eqref{K},
$$
\th\frac{\rho_{{D}_\phi}(z_1,z_2)}{2}=\frac{R|z_1-z_2|}{|R^2-(z_1-z_0)(\overline{z}_2-\overline{z}_0)|}=\frac{R|z_1-z_2|}{|R^2-(e^{-i\theta}z_1+R-1)(e^{i\theta}\overline{z}_2+R-1)|}$$
$$=\frac{R|z_1-z_2|}{|2R-1-(R-1)(e^{-i\theta}z_1+e^{i\theta}\overline{z}_2)-z_1\overline{z}_2|}.
$$
Besides,
$$\th\frac{\rho_{\mathbb{H}_\psi}(z_1,z_2)}{2}=\frac{|z_1-z_2|}{|2-e^{-i\psi}z_1-e^{i\psi}\overline{z}_2|}$$
and from \eqref{R} we have
\begin{equation}\label{1a}
1+a\ge\frac{2R}{2R-1}.
\end{equation}

Taking into account \eqref{1a}, we see that  \eqref{a}  will be proved if we will show that
\begin{equation}\label{2R}
\frac{R|z_1-z_2|}{|2R-1-(R-1)(e^{-i\theta}z_1+e^{i\theta}\overline{z}_2)-z_1\overline{z}_2|}\le \frac{2R}{2R-1}\,\frac{|z_1-z_2|}{|2-e^{-i\psi}z_1-e^{i\psi}\overline{z}_2|}
\end{equation}
or
\begin{equation}\label{b}
\left|1-\frac{R-1}{2R-1}\,(e^{-i\theta}z_1+e^{i\theta}\overline{z}_2)-\frac{z_1\overline{z}_2}{2R-1}\right|\ge\frac{1}{2}\,|2-e^{-i\psi}z_1-e^{i\psi}\overline{z}_2|.
\end{equation}

Denote $\zeta_k(\varphi)=1-e^{-i\varphi}z_k$, $k=1$, $2$. According to Theorem~\ref{supp}~(II),  the points $\zeta_1(\psi)$ and $\overline{\zeta_2(\psi)}$ have the same argument, consequently,  $|\zeta_1(\psi)+\overline{\zeta_2(\psi)}|=|\zeta_1(\psi)|+|\overline{\zeta_2(\psi)}|$.
We rewrite \eqref{b} in the form
$$
\left|\frac{R-1}{2R-1}(2-e^{-i\theta}z_1-e^{i\theta}\overline{z}_2)+\frac{1-z_1\overline{z}_2}{2R-1}\right|\ge\frac{1}{2}\,|2-e^{-i\psi}z_1-e^{i\psi}\overline{z}_2|
$$
or, taking into account that $$1-z_1\overline{z}_2=1-e^{-i\psi}z_1e^{i\psi}\overline{z}_2=1-(1-\zeta_1(\psi))(1-\overline{\zeta_2(\psi)})=\zeta_1(\psi)+\overline{\zeta_2(\psi)}-\zeta_1(\psi)\overline{\zeta_2(\psi)},$$
as
\begin{equation}\label{3}
\left|\frac{R-1}{2R-1}(\zeta_1(\theta)+\overline{\zeta_2(\theta)})+\frac{\zeta_1(\psi)+\overline{\zeta_2(\psi)}}{2R-1}-\frac{\zeta_1(\psi)\overline{\zeta_2(\psi)}}{2R-1}\right|\ge\frac{1}{2}\,|
\zeta_1(\psi)+\overline{\zeta_2(\psi)}|.
\end{equation}
Denote $r=|
\zeta_1(\psi)+\overline{\zeta_2(\psi)}|$,
$$
A=\frac{R-1}{2R-1}(\zeta_1(\theta)+\overline{\zeta_2(\theta)})+\frac{\zeta_1(\psi)+\overline{\zeta_2(\psi)}}{2R-1}\,, \quad B=\frac{\zeta_1(\psi)\overline{\zeta_2(\psi)}}{2R-1}\,.
$$
 Since the curve  $z=\zeta_1(\phi)+\overline{\zeta_2(\phi)}$, $0\le\phi\le 2\pi$, is an ellipse, i.e. convex smooth  curve, and the modulus of $\zeta_1(\phi)+\overline{\zeta_2(\phi)}$ attains its maximal value at the point $\phi=\psi$, we conclude that the ellipse is outside of the disk of radius $r$, centered at the origin, and touches its boundary circle at some point $e^{i\eta}r$. Then the ellipse is in the half-plane $\{z\in \mathbb{C}\mid \Re (e^{-i\eta}z)\ge r\}$.   This implies
$$
\Re (e^{-i\eta}A)=\frac{R-1}{2R-1}\Re [e^{-i\eta}(\zeta_1(\theta)+\overline{\zeta_2(\theta)})]+\frac{1}{2R-1}\Re [e^{-i\eta}{\zeta_1(\psi)+\overline{\zeta_2(\psi)}}]
\ge \frac{Rr}{2R-1},
$$
thus,
\begin{equation}\label{c}
|A|\ge \frac{Rr}{2R-1}\,.
\end{equation}
At last, since the points $\zeta_1(\psi)$ and $\overline{\zeta_2(\psi)}$  have the same argument we obtain
$$
|\zeta_1(\psi)\overline{\zeta_2(\psi)}|\le \frac{1}{2}\, |\zeta_1(\psi)+\overline{\zeta_2(\psi)}|=\frac{1}{2}\,r,
$$
therefore,
\begin{equation}\label{d}
|B|\le \frac{r}{2(2R-1)}\,.
\end{equation}
From \eqref{c}, \eqref{d} and the triangle inequality we obtain
$$
|A-B|\ge|A|-|B|\ge  \frac{Rr}{2R-1}\,-\frac{r}{2(2R-1)}\,=\frac{r}{2}\,,
$$
therefore, we have inequality \eqref{3}, equivalent to~\eqref{a}.\medskip

\section{Corollaries of the main theorem}

\begin{corollary}\label{cor1}
Under the assumptions of Theorem~\ref{th1}  we have
\begin{equation}\label{main1}
(1+a)^{-1}s_\mathbb{U}(z_1,z_2)\le s_\mathbb{U}(f(z_1),f(z_2))\le (1+a)s_\mathbb{U}(z_1,z_2), \quad z_1, z_2\in \mathbb{U},
\end{equation}
and the constants $(1+a)^{-1}$ and $(1+a)$  are the best possible.
\end{corollary}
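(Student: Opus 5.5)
The right-hand inequality in \eqref{main1} is exactly Theorem~\ref{th1}. For the left-hand inequality I will apply Theorem~\ref{th1} to the inverse map. The inverse of $f$ is $f^{-1}(w)=\frac{w-a}{1-aw}$, which is not of the form \eqref{moeb}. It is, however, conjugate to such a map by the rotation $z\mapsto -z$: setting $g(z)=\frac{z+a}{1+az}$, we have $f^{-1}(w)=-g(-w)$. The metric $s_\mathbb{U}$ is invariant under rotations, because both $|u-v|$ and $\inf_{w\in\partial\mathbb{U}}(|u-w|+|w-v|)$ are. Therefore, for $w_1,w_2\in\mathbb{U}$,
$$
s_\mathbb{U}(f^{-1}(w_1),f^{-1}(w_2))=s_\mathbb{U}(g(-w_1),g(-w_2))\le(1+a)s_\mathbb{U}(-w_1,-w_2)=(1+a)s_\mathbb{U}(w_1,w_2).
$$
Now put $w_k=f(z_k)$. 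This gives $s_\mathbb{U}(z_1,z_2)\le(1+a)s_\mathbb{U}(f(z_1),f(z_2))$, which is the left inequality.

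For sharpness, the constant $1+a$ on the right is sharp by Theorem~\ref{th1}. Suppose the lower constant $(1+a)^{-1}$ could be replaced by some $c>(1+a)^{-1}$ valid for this $a$. Conjugating by the rotation $z\mapsto -z$ again, we would get $s_\mathbb{U}(g(\zeta_1),g(\zeta_2))\le c^{-1}s_\mathbb{U}(\zeta_1,\zeta_2)$ for all $\zeta_1,\zeta_2$, with $c^{-1}<1+a$. Since $g=f$, this contradicts the sharpness of $1+a$ in Theorem~\ref{th1}.

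The only point needing care is to use the sharpness for the same $a$, not an arbitrary one. The sharpness statement in Theorem~\ref{th1} is for each fixed $a$, so this holds. I therefore expect no real obstacle. The main task is to state the rotation invariance of $s_\mathbb{U}$ and the identity $f^{-1}(w)=-f(-w)$ cleanly.
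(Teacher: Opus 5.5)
Your proof is correct, and it is essentially the argument the paper leaves implicit, since Corollary~\ref{cor1} is stated there without proof: both the left inequality and the sharpness of $(1+a)^{-1}$ come from Theorem~\ref{th1} applied to $f^{-1}(w)=-f(-w)$, using the invariance of $s_\mathbb{U}$ under $z\mapsto -z$. The sharpness of $1+a$ that you invoke is part of the statement of Theorem~\ref{th1}; the paper does not prove it in its argument but attributes the lower bound $L(a)\ge 1+a$ to earlier work, so relying on it is legitimate.
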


\begin{corollary}\label{cor2}
Let $F:\mathbb{B}^n\to \mathbb{B}^n$  be a Moebius automorphism of the unit ball $\mathbb{B}^n$ in $\mathbb{R}^n$ and $F(0)=a$. Then
$$(1+|a|)^{-1}s_{\mathbb{B}^n}(z_1,z_2)\le s_{\mathbb{B}^n}(F(z_1),F(z_2))\le (1+|a|)s_{\mathbb{B}^n}(z_1,z_2), \quad z_1, z_2\in \mathbb{B}^n,$$
where  $|a|$ is the Euclidean norm of $a$ in $\mathbb{R}^n$.
\end{corollary}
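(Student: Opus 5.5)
Corollary~\ref{cor2} will be reduced to Theorem~\ref{th1} and Corollary~\ref{cor1} in two steps. First I normalize the automorphism by orthogonal maps. Then I restrict to a two-dimensional disk that contains all the relevant points.

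\textbf{Normalization.} Every Moebius automorphism $F$ of $\mathbb{B}^n$ with $F(0)=a$ can be written as $F=T_a\circ Q$. Here $Q$ is an orthogonal map of $\mathbb{R}^n$, and $T_a$ is the standard hyperbolic translation of $\mathbb{B}^n$ along the diameter through $a$, taking $0$ to $a$. Orthogonal maps preserve Euclidean distances and map $\partial\mathbb{B}^n$ onto itself, so $s_{\mathbb{B}^n}$ is invariant under them. Composing with a further rotation $P$, we may therefore assume $a=|a|e_1$ and $F=T_{|a|}$, the translation along the $x_1$-axis. Write $t=|a|$.

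\textbf{Restriction to a plane.} Fix $z_1,z_2\in\mathbb{B}^n$. Choose a two-dimensional plane $\Pi$ through the origin that contains $e_1$, $z_1$ and $z_2$. This is possible unless $e_1,z_1,z_2$ span a three-dimensional subspace. In that case I instead choose a $2$-plane or $3$-space containing them, as explained below.

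Suppose first that $z_1,z_2,e_1$ lie in a $2$-plane $\Pi$. Then $T_t$ maps the disk $\Pi\cap\mathbb{B}^n$ onto itself, and in a complex coordinate on $\Pi$ with $e_1\mapsto 1$ it acts as $z\mapsto (z+t)/(1+tz)$. I also need $s_{\mathbb{B}^n}(u,v)=s_{\Pi\cap\mathbb{B}^n}(u,v)$ for $u,v\in\Pi$. This holds because the closest approach of the ellipsoid of revolution with foci $u,v$ to the sphere is attained in the plane through $u$, $v$ and $0$. Equivalently, for $w\in\partial\mathbb{B}^n$, the reflection of $w$ across $\Pi$ has the same sum $|u-w|+|w-v|$, and moving $w$ toward $\Pi$ along the sphere does not increase this sum, by symmetry and convexity. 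Corollary~\ref{cor1} on $\Pi$ then gives both inequalities.

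\textbf{The main obstacle} is the general position case, where $z_1,z_2,e_1$ are not coplanar with $0$. Here I would use the fact that $T_t$ commutes with rotations about the $x_1$-axis. The plan is to replace $z_2$ by its image under a rotation about the $x_1$-axis that brings it into the plane through $e_1$ and $z_1$. The problem is that this changes $s(z_1,z_2)$.

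A cleaner route is to avoid planes through $0$ altogether. Take the $2$-plane (or circle) through $z_1$ and $z_2$ that is orthogonal to $\partial\mathbb{B}^n$ and whose disk contains the hyperbolic geodesic through $z_1,z_2$. Then express $s$ through Theorem~\ref{supp}, via supporting half-spaces, and repeat the proof of Theorem~\ref{th1} verbatim in $\mathbb{R}^n$. The maximal ellipsoid of revolution with foci $z_1,z_2$ touches the sphere at a point $\omega$, and $s=\tanh\frac{\rho_H}{2}$ for the tangent half-space $H$. The preimage of $H$ under $F$ is a ball or half-space tangent to $\partial\mathbb{B}^n$ at $F^{-1}(\omega)$ and passing through $F^{-1}(\infty)=-a/|a|^2$. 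From there the radius estimate $R\ge\frac12(1+1/|a|)$ and the computation with the ball formula for the hyperbolic metric carry over. The planar complex notation is replaced by the inner products $1-\langle \omega,z_k\rangle$, which satisfy the same identities.

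Finally, the left inequality follows by applying the right one to $F^{-1}$, which is an automorphism with $|F^{-1}(0)|=|a|$.
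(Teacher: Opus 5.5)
You handle the normalization, the coplanar case and the left inequality via $F^{-1}$ correctly. The cleanest reason for $s_{\mathbb{B}^n}=s_{\Pi\cap\mathbb{B}^n}$ is this: along a half great circle $w=c\,p+\sqrt{1-c^2}\,q$ with $p\in\Pi$ and $q\perp\Pi$, the sum $|u-w|+|w-v|$ is a concave function of $c$, so it is minimal at $w=\pm p$. You are also right that general position is the real issue. The paper's proof says the reflection keeps the plane through $u$, $v$, $0$, but an inversion in a sphere centred at $a/|a|^2$ preserves a plane through $0$ only if that plane contains $a$. So the paper's reduction effectively covers only your coplanar case.

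Your general case, however, is not a proof. Case (a) and the bound $R\ge\frac12(1+1/|a|)$ do carry over to $\mathbb{R}^n$, but the case (b) estimate \eqref{2R} does not transfer verbatim. Its proof works with the complex numbers $\zeta_k(\varphi)=1-e^{-i\varphi}z_k$. It uses their moduli $|e^{i\psi}-z_k|$, the common argument of $\zeta_1(\psi)$ and $\overline{\zeta_2(\psi)}$, and the product $\zeta_1(\psi)\overline{\zeta_2(\psi)}$. It also uses the fact that $\zeta_1(\theta)+\overline{\zeta_2(\theta)}$ lies on the ellipse $\varphi\mapsto\zeta_1(\varphi)+\overline{\zeta_2(\varphi)}$. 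The real numbers $1-\langle\omega,z_k\rangle$ are only the real parts of these quantities and satisfy none of these identities. More fundamentally, the argument needs the tangency point $\theta=F^{-1}(\omega)$ to lie in the plane $\Pi$ through $0,z_1,z_2$, which contains $\psi$. In general position it does not, and that is exactly the missing step. Two smaller points: the ellipsoid whose tangent half-space you pull back must have foci $F(z_1),F(z_2)$, not $z_1,z_2$. And the section orthogonal to $\partial\mathbb{B}^n$ plays no role, since $s_{\mathbb{B}^n}$ is not computed there.

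The gap can be closed by moving $\theta$ into $\Pi$. In case (b), $D=B(-(R-1)\theta,R)$ with $R\ge1$, and
\[
\tanh^2\frac{\rho_D(z_1,z_2)}{2}=\frac{R^2|z_1-z_2|^2}{R^2|z_1-z_2|^2+P_1P_2},\qquad P_k=1-|z_k|^2+2(R-1)\bigl(1-\langle z_k,\theta\rangle\bigr).
\]
Write $\theta=c\,\theta_\Pi+\sqrt{1-c^2}\,\theta_\perp$ with unit vectors $\theta_\Pi\in\Pi$ and $\theta_\perp\perp\Pi$. Then each $P_k$ is a positive affine function of $c\in[-1,1]$. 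Hence $P_1P_2$ is monotone or concave in $c$, and it is minimal at $c=1$ or $c=-1$. Replacing $\theta$ by $\pm\theta_\Pi$ therefore does not decrease $\tanh\frac{\rho_D}{2}$. The new ball is centred in $\Pi$, so its hyperbolic distance between $z_1,z_2$ is the planar one. The paper's proof of \eqref{2R} uses only $R\ge1$ and works for every tangency point. Combining this with $s_{\mathbb{B}^n}=s_{\Pi\cap\mathbb{B}^n}$ on $\Pi$ gives $s_{\mathbb{B}^n}(F(z_1),F(z_2))\le\frac{2R}{2R-1}\,s_{\mathbb{B}^n}(z_1,z_2)\le(1+|a|)\,s_{\mathbb{B}^n}(z_1,z_2)$.
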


\begin{proof} Actually, by \cite[thrm~3.5.1~(i)]{beardon}, any Moebius automorphism of $\mathbb{B}^n$ is a composition of an orthogonal mapping and a reflection with respect to a sphere orthogonal to the boundary of $\mathbb{B}^n$. Since orthogonal mappings keep $s$-metric and for every $u$, $v\in \mathbb{B}^n$ any reflection keeps the two-dimensional plane $L$ containing $u$, $v$ and $0$, the problem reduces to considering the Moebius transformation in the plane $L$ and applying Theorem~\ref{main} and its Corollary~\ref{cor1}.

\end{proof}

The following corollary improves the statement of Theorem~\ref{th1}.

\begin{corollary}\label{cor3}
Let the assumptions of Theorem~\ref{th1}  hold and $e^{i\phi}$ be the contact point  of the unit circle and the maximal ellipse with foci at the points $w_1$ and $w_2$, contained in~$\mathbb{U}$. If $\cos\phi\ge a$, then
$$
s_\mathbb{U}(w_1,w_2)\le \left(1+a\,\frac{\cos\phi-a}{1-a\cos\phi}\right)\,s_\mathbb{U}(z_1,z_2),
$$
and if $\cos\phi< a$, then
$$
s_\mathbb{U}(w_1,w_2)\le s_\mathbb{U}(z_1,z_2).
$$
\end{corollary}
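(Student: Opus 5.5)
The plan is to rerun the proof of Theorem~\ref{th1} without the one lossy step. In case (b) the only inequality that depends on $a$ is \eqref{1a}, where $2R/(2R-1)$ is replaced by its upper bound $1+a$ coming from \eqref{R}. Everything after that, namely \eqref{2R}, \eqref{b}, \eqref{3} and the estimates \eqref{c}, \eqref{d}, holds with $2R/(2R-1)$ kept exactly as it is. So that part of the argument already gives
$$
s_\mathbb{U}(w_1,w_2)\le \frac{2R}{2R-1}\,s_\mathbb{U}(z_1,z_2)
$$
in the internal tangency case, and the inequality $s_\mathbb{U}(w_1,w_2)\le s_\mathbb{U}(z_1,z_2)$ in case (a). What remains is to compute $R$ exactly from the contact angle and to decide which contact angles give case (a) and which give case (b).

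First, I square \eqref{R1} instead of applying the triangle inequality to it:
$$
(R-1)^2-\frac{2(R-1)\cos\theta}{a}+\frac{1}{a^2}=R^2 .
$$
This is linear in $R$ and gives
$$
R=\frac{1+2a\cos\theta+a^2}{2a(a+\cos\theta)},\qquad 2R-1=\frac{1+a\cos\theta}{a(a+\cos\theta)},
$$
and therefore
$$
\frac{2R}{2R-1}=1+a\,\frac{a+\cos\theta}{1+a\cos\theta}.
$$
The case split comes from the same formula. $\Gamma_\theta$ is a straight line exactly when $-1/a$ lies on the tangent to $\partial\mathbb{U}$ at $e^{i\theta}$, that is, when $a+\cos\theta=0$. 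Internal tangency, with a finite positive $R$ and the centre $z_0=-(R-1)e^{i\theta}$ on the far side of the origin, corresponds to one sign of $a+\cos\theta$. External tangency corresponds to the other sign, and there case (a) already gives the constant $1$.

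Second, I translate $\theta$ into $\phi$ through $e^{i\phi}=f(e^{i\theta})$. Restricted to the unit circle, the map $f$ (or $f^{-1}$) acts on cosines by the rule $c\mapsto (c\mp a)/(1\mp ac)$. Substituting this into the sign condition and into $(a+\cos\theta)/(1+a\cos\theta)$ should turn the threshold into the stated condition $\cos\phi\ge a$. It should also turn the constant into
$$
1+a\,\frac{\cos\phi-a}{1-a\cos\phi}.
$$

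The main obstacle is bookkeeping rather than analysis. I need to be certain which of $\theta$ and $\phi$ belongs to the $z$-side and which to the $w$-side, and which sign of $a$ appears in the conversion. A naive substitution gives $(a+\cos\theta)/(1+a\cos\theta)=\cos\phi$. That yields the weaker constant $1+a\cos\phi$ and the threshold $\cos\phi>0$, not $\cos\phi\ge a$. So I will recheck carefully the position of $f^{-1}(\infty)=-1/a$, the direction of $f$ on $\partial\mathbb{U}$, and the orientation used in \eqref{R1}, and I expect the corrected conversion to give exactly the stated formula. As a sanity check at the boundary case $\cos\phi=a$, the stated constant equals $1$, so it matches case (a) continuously. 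At $\phi=0$ the constant equals $1+a$, recovering Theorem~\ref{th1}. The two cases of the corollary then follow.
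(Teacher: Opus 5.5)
Your plan follows the paper's first half correctly. Solving \eqref{R1} exactly gives $R=\frac{1+2a\cos\theta+a^2}{2a(a+\cos\theta)}$. The chain \eqref{2R}--\eqref{3} only uses $R>1$, so it yields the constant $\frac{2R}{2R-1}=1+a\frac{a+\cos\theta}{1+a\cos\theta}$ in the internal-tangency case, and case (a) gives the constant $1$.

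The gap is the one step with new content, the passage from $\theta$ to $\phi$. You do not carry it out, and the rule you propose for it is false. The restriction of $f$ to $\partial\mathbb{U}$ does not act on cosines by $c\mapsto (c\mp a)/(1\mp ac)$. For instance, $f(i)=(i+a)/(1+ai)$ has real part $2a/(1+a^2)$, not $a$. So the mismatch you noticed (constant $1+a\cos\phi$, threshold $\cos\phi>0$) is not an orientation or sign issue. No choice of signs in your rule produces the stated constant, and your expectation that a corrected conversion will work is not yet an argument.

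What is needed is to take real parts in $e^{i\theta}=f^{-1}(e^{i\phi})=\frac{e^{i\phi}-a}{1-ae^{i\phi}}$:
\[
\cos\theta=\frac{(1+a^2)\cos\phi-2a}{1+a^2-2a\cos\phi}.
\]
Thus the induced map on cosines is the one-dimensional Moebius map with parameter $2a/(1+a^2)$, not $a$. Hence
\[
a+\cos\theta=\frac{(1-a^2)(\cos\phi-a)}{|1-ae^{i\phi}|^2},\qquad 1+a\cos\theta=\frac{(1-a^2)(1-a\cos\phi)}{|1-ae^{i\phi}|^2},
\]
so $\frac{a+\cos\theta}{1+a\cos\theta}=\frac{\cos\phi-a}{1-a\cos\phi}$, which is exactly the stated constant.

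The same identity settles the case split you left open. Your formula for $R$ gives $R-1=\frac{1-a^2}{2a(a+\cos\theta)}$. So internal tangency ($R>1$) holds iff $a+\cos\theta>0$, iff $\cos\phi>a$. The straight-line case is $\cos\phi=a$, where the stated constant equals $1$. The case $\cos\phi<a$ is external tangency, where \eqref{2a} applies. Equivalently, $\infty\in D_\phi$ iff $f(\infty)=1/a\in\mathbb{H}_\phi$, iff $\cos\phi<a$.

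With this computation inserted, your argument is complete and coincides with the paper's proof.
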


\begin{proof}
In the proof of Theorem~\ref{th1} we use estimate \eqref{R} and its corollary \eqref{1a} for the radius $R$. But if we know the contact point $e^{i\phi}$, we can  use the exact value of $R$. From \eqref{R1} we find
$$
R=\frac{1+2a\cos\theta+a^2}{2a(a+\cos \theta) }\,.
$$
where $e^{i\theta}=f^{-1}(e^{i\varphi})$.
Then, instead of $(1+a)$ in \eqref{main}, we obtain the constant
$$
\frac{2R}{2R-1}=1+a\,\frac{a+\cos\theta}{1+a\cos \theta}\,.
$$
Since
$$
e^{i\theta}=\frac{e^{i\phi}-a}{1-e^{i\phi }a}\,,
$$
we obtain
$$
\cos\theta= \frac{(1+a^2)\cos\phi-2a}{1+a^2-2a\cos \phi}\,,
$$
therefore,
$$
\frac{2R}{2R-1}=1+a\,\frac{\cos\phi-a}{1-a\cos\phi}\,.
$$
In the case $\cos\phi< a$ we have the external tangency of circles. In the proof above it was showed that then \eqref{2a} holds.
\end{proof}

\section{ACKNOWLEDGEMENT}

The author thanks Prof M.~Vuorinen for valuable remarks.

\section{FUNDING}
This work was financially supported by the Russian Science Foundation (grant No~23--11--00066).

\section{CONFLICT OF INTEREST}
The author of this work declare that he has no conflicts of interest.

\end{document}